\newtheorem{theorem}{Theorem}[section]
\newtheorem{lemma}[theorem]{Lemma}
\newtheorem{cor}[theorem]{Corollary}
\theoremstyle{remark}
\newtheorem{remark}{Remark}[section]
\theoremstyle{definition}
\numberwithin{equation}{section} \setcounter{tocdepth}{1}
\newcommand{\bea}{\begin{eqnarray}}
\newcommand{\eea}{\end{eqnarray}}
\begin{document}
\title[The global attractiveness of the fixed point]
{The global attractiveness of the fixed point of a gonosomal evolution operator}

\author{Akmal T.~Absalamov}
\address{Samarkand State University,
Boulevard str., 140104,
Samarkand, Uzbekistan.}
\email{absalamov@gmail.com}

\date{}

\maketitle{}
\begin{abstract}
In the paper we prove a conjecture of U.A. Rozikov and R. Varro
about globally attractiveness of a unique fixed point of the
normalized evolution operator of a sex linked inheritance.
\end{abstract}
{\bf Keywords:}
Bisexual population, gonosomal operator, fixed point, trajectory.

%-----------------------------------------------------------------------------
%-----------------------------------------------------------------------------
%                           INTRODUCTION
%-----------------------------------------------------------------------------
%-----------------------------------------------------------------------------
\section{Introduction}\label{int}

\noindent
A {\it population} is a set of organisms of the same kind, some long time living in one territory
(occupying a particular area) and are completely isolated from other the same groups.

In the life sciences the {\it population dynamics} branch studies the
size and age composition of populations as dynamical systems. These investigations are
motivated by their application to population growth,  ageing populations,  or population decline.

The population dynamics is a well developed branch of mathematical biology, which has a
history of more than two hundred years \cite{Ba}, although more recently
the branch of mathematical biology has greatly increased.
Many concrete models of mathematical biology
described by corresponding non-linear evolution operator.
 Since there is no any general theory of non-linear operators, for each concrete such operator
 one has to use a specific method of investigation.

 In this paper we also study dynamical system
 generated by a concrete non-linear multidimensional operator describing a gonosomal evolution.
Our model is related to a bisexual population. We note that investigation of dynamical systems
generated by evolution operators of free and bisexual population can be reduced to the study of
nonlinear dynamical systems (see \cite{Ganikhodzhaev.book.2011}, \cite{K}, \cite{Lyubich.book.1992},
\cite{Rozikov.book.2013} for more details).

In biology sex is determined genetically, males and females have
different alleles or even different genes that specify their
sexual morphology. In animals this is often accompanied by
chromosomal differences. Determination genetically is generally
through chromosome combinations of $XY$ (for example: humans,
mammals), $ZW$ (birds). Generally in this method, the sex is
determined by amount of genes expressed across the two chromosomes.
There are some sex linked systems which depend on temperature and
even some of systems have sex change phenomenon, see
\cite{Rozikov.book.2013} for more details. In \cite{Varro.book.2015}
an algebra associated to a sex change is constructed.

In bisexual population any kind of differentiation must agree with
the sex differentiation, i.e. all the organisms of one type must
belong to the same sex. Thus it is possible to speak of male and
female types. For mathematical models of bisexual population, see
\cite{Ladra.book.2013}, \cite{Lyubich.book.1992}, \cite{Reed.book.1997},
\cite{Rozikov.book.2011}.

Sex is controlled by two chromosomes called gonosomes. Gonosomal
inheritance is a mode of inheritance that is observed for traits
related to a gene encoded on the sex chromosomes.

We discuss one example of sex-linked inheritance. Hemophilia
is a lethal recessive $X$-linked disorder: a female carrying two
alleles for hemophilia die. Therefore, if we denote by $X^h$ the
gonosome  $X$ carrying the hemophilia, there are only two female
genotypes: $XX$ and $XX^h$ ($X^hX^h$ is lethal) and two male
genotypes: $XY$ and $X^hY$. We have four types of crosses defined
as
\begin{align*}
XX   \times{XY}   &\rightarrow \frac{1}{2}XX,\: \frac{1}{2}XY, \\
XX   \times{X^hY} &\rightarrow \frac{1}{2}XX^h,\: \frac{1}{2}XY, \\
XX^h \times{XY}   &\rightarrow \frac{1}{4}XX,\: \frac{1}{4}XX^h,\:
\frac{1}{4}XY, \frac{1}{4}X^hY, \\
XX^h \times{X^hY} &\rightarrow \frac{1}{3}XX^h, \: \frac{1}{3}XY, \:
\frac{1}{3}X^hY.
\end{align*}

Let $F=\{XX,XX^h\}$ and $M=\{XY,X^h Y\}$ be sets of genotypes.
Assume that state of the set $F$ is given by a real vector $(x,y)$
and state of $M$ by a real vector $(u,v)$. Then a state of the set
$F\cup{M}$ is given by the vector $t=(x,y,u,v)\in{\mathbb{R}^4}$.

We consider the following subsets of $\mathbb{R}^4$:
$$S^{3}=\Bigl\{s=(x,y,u,v)\in{\mathbb{R}^{4}}: \, x\geq0, \, y\geq0,
\, u\geq0, \, v\geq0,  \, x+y+u+v=1 \Bigl\}$$ the three-dimensional simplex;
$$\Theta=\{s=(x,y,u,v)\in{S^{3}}: (x,y)=(0,0) \,\, or\,\, (u,v)=(0,0)\};$$
$$S^{2,2}=S^{3}\setminus\Theta.$$

If $t'=(x',y',u',v')$ is a state of the system $F\cup{M}$ in the
next generation, then by the above rule we get the evolution
operator $W:S^{2,2}\rightarrow{S^{2,2}}$ defined by
\begin{equation}\label{A1}
W: \left\{
\begin{array}{ll}
\begin{aligned}
& x' &= \quad &\frac{2xu+yu}{4(x+y)(u+v)}, \\[2mm]
& y' &= \quad &\frac{6xv+3yu+4yv}{12(x+y)(u+v)}, \\[2mm]
& u' &= \quad &\frac{6xu+6xv+3yu+4yv}{12(x+y)(u+v)}, \\[2mm]
& v' &= \quad &\frac{3yu+4yv}{12(x+y)(u+v)}.
\end{aligned}
\end{array}
 \right.
\end{equation}
\begin{remark} For a general sex-linked population the non-linear evolution operator
 first derived by Kesten \cite{K}. The operator \eqref{A1}
obtained from the operator of Kesten by choosing
appropriate coefficients for the hemophilia.
\end{remark}

The main problem for a given operator $W$ and arbitrarily initial point
$s^{(0)}\in S^{2,2}$, is to describe the limit points of the trajectory
$\{s^{(m)}\}_{m=0}^{\infty}$, where $s^{(m)}=W^{m}(s^{(0)})=\underbrace {W(W(...W}_{m}(s^{(0)}))...)$.

%Note that the operator \eqref{A2} describes evolution of a
%hemophilia. The dynamical system generated by the gonosomal
%operator \eqref{A2} is complicated. We study the dynamical
%system generated by the gonosomal operator \eqref{A1} which
%is a particular case of \eqref{A2} corresponding to the case
%$\eta=\nu=2$ and the coefficients
%\begin{equation}\label{A5}
%\begin{aligned}
%&p_{11,1}=\frac{1}{2},  &p_{11,2} &=0,    &h_{11,1}&=\frac{1}{2},  &h_{11,2}&=0,\\
%&p_{12,1} =0,    &p_{12,2} &=\frac{1}{2},  &h_{12,1} &=\frac{1}{2},  &h_{12,2}&=0,\\
%&p_{21,1} =\frac{1}{4},  &p_{21,2} &=\frac{1}{4},  &h_{21,1} &=\frac{1}{4},  &h_{21,2}&=\frac{1}{4},\\
%&p_{22,1} =0,    &p_{22,2} &=\frac{1}{3},  &h_{22,1} &=\frac{1}{3},  &h_{22,2}&=\frac{1}{3},
%\end{aligned}
%\end{equation}
\begin{remark}
In \cite{AR} unnormalized form of the operator \eqref{A1}
on $\mathbb{R}^{4}$ is considered with arbitrarily coefficients  and obtained
various conditions which lead to the set of limit points  being the
origin or infinity.
\end{remark}

%It is easy to see that the coefficients \eqref{A5} of the operator
%\eqref{A1} satisfies the condition \eqref{A4}. Thus
%$$W:S^{2,2}\rightarrow{S^{2,2}}.$$
\begin{remark}
In their work \cite{Rozikov.book.2016}
U.A. Rozikov and R. Varro considered normalized gonosomal evolution
operator \eqref{A1} of a sex linked inheritance. Mainly they studied
dynamical systems of a hemophilia which is biological group of disorders
connected with genes that diminish the body's ability to control blood
clotting or coagulation that is used to stop bleeding when a blood
vessel is broken. They proved that the operator $W$ has a unique
nonhyperbolic fixed point $s_0=(\frac{1}{2},0,\frac{1}{2},0)$ and there
is an open neighborhood $\cup(s_0)\subset{S^{2,2}}$ of $s_0$ such that
for any initial point $s\in{\cup(s_0)}$, the limit point of trajectories
$\{W^{m}(s)\}$ tends to $s_0$. Moreover they made a conjecture for an
initial point $s\in{S^{2,2}}$. In this article we give a proof of that
conjecture.
\end{remark}

\section{Results}

The main achievement of the present manuscript is the following
result which is given as a conjecture by U.A. Rozikov and R.
Varro in \cite{Rozikov.book.2016}.

\begin{theorem}\label{thm:main}
The operator $W:S^{2,2}\rightarrow{S^{2,2}}$ given by \eqref{A1}
has unique nonhyperbolic fixed point $s_0=(\frac{1}{2},0,
\frac{1}{2},0)$ and for any initial point $s\in{S^{2,2}}$ we
have
\begin{equation}\label{A6}
\lim\limits_{m\rightarrow \infty}{W^{m}(s)}=s_0=(\frac{1}
{2},0,\frac{1}{2},0).
\end{equation}
\end{theorem}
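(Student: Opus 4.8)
The plan is to reduce the four–dimensional dynamics to a two–dimensional one by passing to the ``hemophilia fractions.'' Since $s\in S^{2,2}$ forces $x+y>0$ and $u+v>0$, I set $p=\frac{y}{x+y}$, $q=\frac{v}{u+v}$ and $f=x+y$ (so $u+v=1-f$). A direct substitution into \eqref{A1} shows that $f'=\frac{3-pq}{6}$ depends only on $(p,q)$, while $(p,q)$ evolves autonomously by
\begin{equation}\label{pq-map}
p'=\frac{3p+6q-5pq}{2(3-pq)},\qquad q'=\frac{p(3+q)}{2(3+pq)}.
\end{equation}
Because $3-pq\ge 2$ and $3+pq\ge 3$ on $[0,1]^2$, the map is smooth there, and an elementary check (the inequality $p'\le 1$ reduces to $3(1-q)(p-2)\le 0$) shows that $[0,1]^2$ is invariant. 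The fixed point $s_0$ corresponds to $(p,q)=(0,0)$, at which $f=\tfrac12$; thus the whole theorem reduces to proving that every orbit of \eqref{pq-map} in $[0,1]^2$ converges to the origin, after which $f_n\to\tfrac12$ and $x_n=(1-p_n)f_n\to\tfrac12$, $u_n=(1-q_n)(1-f_n)\to\tfrac12$, $y_n=p_nf_n\to0$, $v_n=q_n(1-f_n)\to0$ all follow at once.

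The heart of the argument is that $S:=p+q$ is a strict Lyapunov function for \eqref{pq-map}. Combining the two fractions over the common denominator $2(9-p^2q^2)>0$ and simplifying, I expect
\begin{equation}\label{Lyap}
(p+q)'-(p+q)=\frac{pq\,\bigl(p^2q+pq^2+3q-3pq-6\bigr)}{9-p^2q^2}.
\end{equation}
On $[0,1]^2$ the bracket is bounded above by $1+1+3-0-6=-1<0$, so $(p+q)'\le p+q$ with equality exactly on the two axes $\{pq=0\}$. Hence $S_n=p_n+q_n$ is non-increasing and converges to some $S_\infty\ge0$.

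It remains to rule out $S_\infty>0$, which I would do by a LaSalle–type invariance argument. The $\omega$–limit set $\Omega$ of any orbit is a nonempty invariant subset of $[0,1]^2$ on which $S\equiv S_\infty$; by \eqref{Lyap} every point of $\Omega$ satisfies $pq=0$. On the axes the dynamics is explicit: \eqref{pq-map} gives $(p,0)\mapsto(\tfrac p2,\tfrac p2)$ and $(0,q)\mapsto(q,0)$. Thus a point $(p,0)$ with $p>0$ leaves the axes immediately (its image has positive product), and $(0,q)$ with $q>0$ does so after one further step; invariance of $\Omega$ therefore forces $\Omega=\{(0,0)\}$ and $S_\infty=0$. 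Consequently $p_n,q_n\to0$, which yields \eqref{A6}. The same computation gives uniqueness of the fixed point: at any fixed point $S'=S$, so $pq=0$, and the explicit boundary maps leave only $(0,0)$.

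The main obstacle is precisely that $s_0$ is nonhyperbolic: the linearization of \eqref{pq-map} at the origin has eigenvalues $1$ and $-\tfrac12$, so neither a contraction argument nor the spectral criterion applies, and a center–manifold computation would give only local (one–sided) attraction. The decisive step is to locate the monotone quantity $S=p+q$ aligned with the neutral eigendirection $p=2q$ and to verify the clean sign in \eqref{Lyap}; everything else is bookkeeping. I would double-check the coefficient simplifications in \eqref{pq-map} and \eqref{Lyap} carefully, since the entire proof rests on the bracket in \eqref{Lyap} being strictly negative throughout the square.
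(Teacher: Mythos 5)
Your proposal is correct; I verified the key computations: the reduction $f'=(3-pq)/6$ with the autonomous map for $(p,q)$, the identity
$(p+q)'-(p+q)=\frac{pq\,(p^2q+pq^2+3q-3pq-6)}{9-p^2q^2}$ with bracket at most $-1$ on the unit square, and the axis dynamics $(p,0)\mapsto(\tfrac p2,\tfrac p2)$, $(0,q)\mapsto(q,0)$. Your argument is structurally parallel to the paper's but executed along a genuinely different route. The paper also reduces to a two-dimensional autonomous system, but in the ratio coordinates $\alpha=y/x$, $\beta=v/u$ of \eqref{not.al.be}, giving the map \eqref{A7}; these are conjugate to your fractions via $\alpha=p/(1-p)$, $\beta=q/(1-q)$, and, like you, the paper uses the coordinate sum as a Lyapunov function (Lemma~\ref{AA}, Corollary~\ref{cor:crucial}). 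Two real differences. First, since your $p,q$ lie in $[0,1]$ automatically, you bypass the paper's Lemma~\ref{lem:maj}, whose quantitative bounds (e.g.\ $x^{(m+2)}\geq 1/8$) are needed precisely to keep $\alpha\in[0,4]$, $\beta\in[0,1]$ along the orbit; your reduction is cleaner on this point. Second, the endgame differs: the paper extracts convergent subsequences by Bolzano--Weierstrass, passes to the limit in the recursion for the sum, and checks that the resulting algebraic equation \eqref{A8} forces $(a,b)=(0,0)$ under the constraint $b\leq a$; you instead run a LaSalle invariance argument, using that the decrease of $p+q$ is strict off the axes together with the explicit axis dynamics to shrink the omega-limit set to the origin. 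Both finishes are valid; yours additionally yields uniqueness of the fixed point of $W$ in a self-contained way (the paper cites Rozikov--Varro for that), while the paper's finish is more elementary in that it avoids omega-limit-set machinery. One shared caveat: in both treatments the nonhyperbolicity of $s_0$ as a fixed point of the four-dimensional map $W$ is really inherited from the reduced two-dimensional system (eigenvalues $1$ and $-\tfrac12$) and the cited prior work, rather than computed directly for $W$.
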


Let $s^{(0)}=(x^{(0)},y^{(0)},u^{(0)},v^{(0)})\in{S^{2,2}}$ be an initial
state (the probability distribution on the set $\{XX,XX^{h};XY,X^{h}Y\}$
of genotypes). This Theorem~{\upshape{\ref{thm:main}}} has
the following biological interpretations: when time goes to infinity, the population tends to the equilibrium state
$s_0=(\frac{1}{2},0,\frac{1}{2},0)$, meaning that the future of the population is stable: genotypes $XX$ and
$XY$ are survived always, but the genotypes $XX^{h}$ and $X^{h}Y$
 will asymptotically disappear. Consequently, only healthy chromosomes will survive.

From biological interpretations of the result \eqref{A6} we can see that
problem of investigating the behavior of trajectories of the
operator \eqref{A1} is great importance in understanding of the
hemophilia at a sex linked inheritance.

Throughout this section
for the trajectories we use the notation
\begin{equation*}
s^{(m)}=(x^{(m)},y^{(m)},
u^{(m)},v^{(m)})=W^{m}(s), \quad m=0,1,2,...
\end{equation*}
We present several lemmas which give useful
estimates and help to prove the Theorem~{\upshape{\ref{thm:main}}}.
\begin{lemma}\label{lem:maj}
Let $s^{(0)}=(x^{(0)}, y^{(0)}, u^{(0)}, v^{(0)})\in{S^{2,2}}$ be any
initial point. Then, for all non-negative integers $m$, it holds that

(i) $x^{(m+1)}\leq{u^{(m+1)}}$ and $v^{(m+1)}\leq{y^{(m+1)}}\leq{u^{(m+1)}}$

(ii)  and that
	\begin{equation*}
	\begin{aligned}
	&\frac{1}{8} \leq \frac{u^{(m+1)}}{4(u^{(m+1)}+v^{(m+1)})} \leq x^{(m+2)}
\leq \frac{u^{(m+1)}}{2(u^{(m+1)}+v^{(m+1)})} \leq \frac{1}{2},\\
	&0\; \leq \frac{v^{(m+1)}}{3(u^{(m+1)}+v^{(m+1)})} \leq y^{(m+2)}
\leq \frac{u^{(m+1)}+2v^{(m+1)}}{4(u^{(m+1)}+v^{(m+1)})}  \leq \frac{1}{2},\\
	&\frac{1}{4} \leq \frac{2x^{(m+1)}+y^{(m+1)}}{4(x^{(m+1)}+y^{(m+1)})} \leq u^{(m+2)}
\leq \frac{3x^{(m+1)}+2y^{(m+1)}}{6(x^{(m+1)}+y^{(m+1)})} \leq \frac{1}{2},\\
	&0\; \leq \frac{y^{(m+1)}}{4(x^{(m+1)}+y^{(m+1)})} \leq v^{(m+2)}
\leq \frac{y^{(m+1)}}{3(x^{(m+1)}+y^{(m+1)})}   \leq\frac{1}{3}.
	\end{aligned}
	\end{equation*}
\end{lemma}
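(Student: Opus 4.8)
The plan is to treat both parts as direct algebraic identities, using only that every coordinate of a point of $S^{2,2}$ is nonnegative and that $x+y>0$ and $u+v>0$ there (the latter being exactly what the exclusion of $\Theta$ guarantees). Throughout I abbreviate $s^{(m+1)}=(x,y,u,v)$ and $s^{(m+2)}=W(s^{(m+1)})=(x',y',u',v')$, so that $x',y',u',v'$ are the four expressions of \eqref{A1} evaluated at $(x,y,u,v)$.

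For (i) I first note that it is really not an inductive claim: since $W$ maps $S^{2,2}$ into itself, each $s^{(m+1)}$ with $m\geq 0$ lies in the image of $W$, so it is enough to prove that for an arbitrary point $(x,y,u,v)\in S^{2,2}$ the image $W(x,y,u,v)=(x',y',u',v')$ satisfies $x'\leq u'$ and $v'\leq y'\leq u'$. Putting all four components over the common positive denominator $12(x+y)(u+v)$, the three inequalities amount to the sign of the numerator differences
\[
u'-x'=\frac{6xv+4yv}{12(x+y)(u+v)},\qquad
y'-v'=\frac{6xv}{12(x+y)(u+v)},\qquad
u'-y'=\frac{6xu}{12(x+y)(u+v)},
\]
each of which is nonnegative by nonnegativity of $x,y,u,v$. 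This settles (i).

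For (ii) I apply $W$ to $s^{(m+1)}=(x,y,u,v)$ and bound $x',y',u',v'$ one at a time. The two coordinates with the cleanest structure factor as
\[
x'=\frac{u}{4(u+v)}\cdot\frac{2x+y}{x+y},\qquad
v'=\frac{y}{12(x+y)}\cdot\frac{3u+4v}{u+v},
\]
and since $1\leq\frac{2x+y}{x+y}\leq 2$ and $3\leq\frac{3u+4v}{u+v}\leq 4$, the inner estimates of the first and fourth chains follow at once. For $y'$ and $u'$, as well as for all the outer bounds, I would clear the (positive) denominators and reduce each comparison to a trivial inequality: for instance, $x'\leq\frac{u}{2(u+v)}$ becomes $uy\geq 0$ after multiplying by $4(x+y)(u+v)$, the lower bound $u'\geq\frac{2x+y}{4(x+y)}$ becomes $yv\geq 0$, and the outer upper bounds by $\tfrac12$ or $\tfrac13$ become $v\geq 0$, $u\geq 0$, $y\geq 0$, $x\geq 0$ for the four chains respectively. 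In this way each of the twelve two-sided comparisons collapses to the nonnegativity of a single monomial.

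The one place where more than nonnegativity is needed is the leftmost bound $\tfrac18\leq\frac{u}{4(u+v)}$ of the first chain, which is equivalent to $u\geq v$; here I invoke part (i) applied to $s^{(m+1)}$, which gives precisely $v^{(m+1)}\leq u^{(m+1)}$. I do not anticipate any genuine obstacle, since no step uses more than clearing a positive denominator; the only real care is organizational---keeping straight which variables carry the superscript $(m+1)$ versus $(m+2)$, and recording that part (i) feeds into part (ii) at exactly this single point. I would present the verification grouped by the four target coordinates $x^{(m+2)},y^{(m+2)},u^{(m+2)},v^{(m+2)}$, each time exhibiting the relevant numerator so that the sign is visible by inspection.
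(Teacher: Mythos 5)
Your proof is correct and takes essentially the same route as the paper: part (i) by exhibiting the nonnegative numerator differences $u'-x'$, $u'-y'$, $y'-v'$ (your numerators are exactly the paper's, up to factoring out $2v$ and the constant $6$), and part (ii) by direct elementary bounds on $W$ applied to $s^{(m+1)}$. The paper compresses part (ii) into the remark that it ``immediately follows from the first part,'' whereas you supply those details explicitly, correctly isolating the bound $\frac{1}{8}\leq\frac{u^{(m+1)}}{4(u^{(m+1)}+v^{(m+1)})}$ as the single place where part (i) is actually needed.
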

\begin{proof}
We provide a sketch of the proof for the first part only. The claim
in the second part immediately follows from the first part. In view
of \eqref{A1}, we have

\begin{equation}\label{A40}
\begin{cases}
\displaystyle
x^{(m+1)}=\frac{2x^{(m)}u^{(m)}+y^{(m)}u^{(m)}}{4(x^{(m)}+y^{(m)})(u^{(m)}+v^{(m)})}, \\[3.5mm]
\displaystyle
y^{(m+1)}=\frac{6x^{(m)}v^{(m)}+3y^{(m)}u^{(m)}+4y^{(m)}v^{(m)}}{12(x^{(m)}+y^{(m)})(u^{(m)}+v^{(m)})}, \\[3.5mm]
\displaystyle
u^{(m+1)}=\frac{6x^{(m)}u^{(m)}+6x^{(m)}v^{(m)}+3y^{(m)}u^{(m)}+4y^{(m)}v^{(m)}}{12(x^{(m)}+y^{(m)})(u^{(m)}+v^{(m)})}, \\[3.5mm]
\displaystyle
v^{(m+1)}=\frac{3y^{(m)}u^{(m)}+4y^{(m)}v^{(m)}}{12(x^{(m)}+y^{(m)})(u^{(m)}+v^{(m)})}.
\end{cases}
\end{equation}

It is thus not difficult to see that
\begin{equation*}
\begin{aligned}
u^{(m+1)}-x^{(m+1)}&=\frac{v^{(m)}(3x^{(m)}+2y^{(m)})}{6(x^{(m)}+y^{(m)})(u^{(m)}+v^{(m)})} \geq 0,\\
u^{(m+1)}-y^{(m+1)}&=\frac{x^{(m)}u^{(m)}}{2(x^{(m)}+y^{(m)})(u^{(m)}+v^{(m)})} \geq 0,\\
y^{(m+1)}-v^{(m+1)}&=\frac{x^{(m)}v^{(m)}}{2(x^{(m)}+y^{(m)})(u^{(m)}+v^{(m)})} \geq 0,
\end{aligned}
\end{equation*}
which complete the proof of the first part.
\end{proof}
Now we make the notations
\begin{equation}\label{not.al.be}
\alpha^{(m)}:=\frac{y^{(m+2)}}{x^{(m+2)}}, \quad
\beta^{(m)}:=\frac{v^{(m+2)}}{u^{(m+2)}}, \quad m=0, 1,\ldots
\end{equation}
\begin{lemma} For any initial point $s^{(0)}=(x^{(0)}, y^{(0)}, u^{(0)}, v^{(0)})\in{S^{2,2}}$
and for any nonnegative integer $m$ the following hold:
\begin{equation}\label{al.be.bdd.seq}
0\leq\alpha^{(m)}\leq4, \quad 0\leq\beta^{(m)}\leq1
\end{equation}
\end{lemma}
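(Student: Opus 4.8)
The plan is to read off both estimates directly from Lemma~\ref{lem:maj}, noting that the two upper bounds in \eqref{al.be.bdd.seq} demand inequalities of quite different strength. I would begin with the left-hand (non-negativity) inequalities, which are essentially free: every iterate lies in $S^{2,2}$, so all of its coordinates are non-negative, and in particular $y^{(m+2)}\geq 0$ and $v^{(m+2)}\geq 0$. Since part~(ii) of Lemma~\ref{lem:maj} furnishes the strictly positive lower bounds $x^{(m+2)}\geq\frac18$ and $u^{(m+2)}\geq\frac14$, the quotients $\alpha^{(m)}=y^{(m+2)}/x^{(m+2)}$ and $\beta^{(m)}=v^{(m+2)}/u^{(m+2)}$ are well defined and satisfy $\alpha^{(m)}\geq 0$ and $\beta^{(m)}\geq 0$.

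For the upper bound $\alpha^{(m)}\leq 4$ it suffices to combine the two crude numerical envelopes supplied by part~(ii) of Lemma~\ref{lem:maj}: from $y^{(m+2)}\leq\frac12$ and $x^{(m+2)}\geq\frac18$ one immediately obtains $\alpha^{(m)}=y^{(m+2)}/x^{(m+2)}\leq(\frac12)/(\frac18)=4$, which is precisely the claimed bound. No finer information is needed here.

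The remaining inequality $\beta^{(m)}\leq 1$ is the one point where a little care is required, and I expect it to be the sole obstacle. The analogous crude estimates $v^{(m+2)}\leq\frac13$ and $u^{(m+2)}\geq\frac14$ yield only $\beta^{(m)}\leq\frac43$, which is too weak. Instead I would invoke part~(i) of Lemma~\ref{lem:maj} applied with $m$ replaced by $m+1$ (legitimate since that part holds for every non-negative integer): the chain $v^{(m+2)}\leq y^{(m+2)}\leq u^{(m+2)}$ gives at once $v^{(m+2)}\leq u^{(m+2)}$, and hence $\beta^{(m)}\leq 1$. If one wishes to avoid relying on part~(i), the same conclusion is transparent from the explicit formulas \eqref{A40} for the $(m+2)$-th iterate: since $u^{(m+2)}$ and $v^{(m+2)}$ share the common denominator $12(x^{(m+1)}+y^{(m+1)})(u^{(m+1)}+v^{(m+1)})$, the difference of their numerators equals $6x^{(m+1)}(u^{(m+1)}+v^{(m+1)})\geq 0$, so $v^{(m+2)}\leq u^{(m+2)}$ and therefore $\beta^{(m)}\leq 1$. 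Collecting the four inequalities establishes \eqref{al.be.bdd.seq}.
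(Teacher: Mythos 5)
Your proof is correct and follows essentially the same route as the paper: the crude envelopes from Lemma~\ref{lem:maj}(ii) give $0\leq\alpha^{(m)}\leq(\frac12)/(\frac18)=4$ and $\beta^{(m)}\geq 0$, while the sharper bound $\beta^{(m)}\leq 1$ comes from $v^{(m+2)}\leq u^{(m+2)}$ via Lemma~\ref{lem:maj}(i), exactly as in the paper. Your added observations --- that the crude bounds would only give $\beta^{(m)}\leq\frac43$, and the alternative direct check $u^{(m+2)}-v^{(m+2)}=\frac{6x^{(m+1)}(u^{(m+1)}+v^{(m+1)})}{12(x^{(m+1)}+y^{(m+1)})(u^{(m+1)}+v^{(m+1)})}\geq 0$ from \eqref{A40} --- are correct but do not change the argument.
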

\begin{proof}
From the first and second inequalities of the part {\it (ii)} of the Lemma~\ref{lem:maj},
for any initial point $s^{(0)}=(x^{(0)}, y^{(0)}, u^{(0)}, v^{(0)})\in{S^{2,2}}$
and for any nonnegative integer $m$ we obtain
\begin{equation*}
0=\frac{min{\bigl\{y^{(m+2)}\bigl\}}}{max{\bigl\{x^{(m+2)}
\bigl\}}}\leq\alpha^{(m)}\leq\frac{max{\bigl\{y^{(m+2)}\bigl\}}}{min{\bigl\{x^{(m+2)}
\bigl\}}}=4.
\end{equation*}
Since for any initial point $s^{(0)}=(x^{(0)}, y^{(0)}, u^{(0)}, v^{(0)})\in{S^{2,2}}$
and for any nonnegative integer $m$, the part {\it (i)} of the Lemma~\ref{lem:maj} gives us the
inequality $v^{(m+2)}\leq{u^{(m+2)}}$ then $$\beta^{(m)}=\frac{v^{(m+2)}}{{u^{(m+2)}}}\leq1.$$
Moreover, from the third and fourth inequalities of the
part {\it (ii)} of the Lemma~\ref{lem:maj} we get the lower bound for $\beta^{(m)}$:
$$\beta^{(m)}\geq\frac{min{\bigl\{v^{(m+2)}\bigl\}}}{max{\bigl\{u^{(m+2)}\bigl\}}}=0.$$
This completes the proof.
\end{proof}
Next, using the system of equations \eqref{A40}, we obtain

\begin{equation}\label{alph.beta.rec}
\begin{aligned}
\alpha^{(m+1)}&=\frac{6\beta^{(m)}+3\alpha^{(m)}+4\alpha^{(m)}\beta^{(m)}}{6+3\alpha^{(m)}},\\
\beta^{(m+1)}&=\frac{3\alpha^{(m)}+4\alpha^{(m)}\beta^{(m)}}{6+6\beta^{(m)}+3\alpha^{(m)}+4\alpha^{(m)}\beta^{(m)}}
\end{aligned}
\end{equation}
which yields the nonlinear dynamical system
\begin{equation}\label{A7}
F:
\begin{cases}
\displaystyle
\alpha'=\frac{6\beta+3\alpha+4\alpha\beta}{6+3\alpha}, \\[2ex]
\displaystyle
\beta'=\frac{3\alpha+4\alpha\beta}{6+6\beta+3\alpha+4\alpha\beta}.
\end{cases}
\end{equation}
with the initial point $(\alpha^{(0)}, \beta^{(0)})\in\Delta$, where
\begin{equation}
\Delta:=\{(\alpha,\beta)\in{\mathbb{R}^{2}}: 0\leq\alpha\leq{4},\,\, 0\leq\beta\leq{1}\}.
\end{equation}
It is not hard to see that $(0,0)$ is the unique non-hyperbolic fixed
point of $F$ with the eigenvalues $\lambda_1=1$, $\lambda_2=-\frac{1}{2}$.
Here we recall that a fixed point of the operator $F$ is called hyperbolic if its
Jacobian at the fixed point has no eigenvalues on the unit circle.
\begin{remark}
Fixed point $(\alpha,\beta)=(0,0)$ of the dynamical system \eqref{A7}
corresponds to the fixed point $s_0=(\frac{1}{2},0,\frac{1}{2},0)$ of
the dynamical system \eqref{A1}. For the uniqueness of the fixed point of
the operator $W$, see \cite{Rozikov.book.2016}.
\end{remark}

\begin{lemma}\label{AA}
For any initial point $(\alpha,\beta)\in\Delta$, we have

(i) $F(\alpha,\beta)\in\Omega\subset{\Delta}$, where
$$\Omega=\{(\alpha,\beta)\in
{\mathbb{R}^{2}}: 0\leq\alpha\leq{2},\,\, 0\leq\beta\leq{1}\};$$
(ii) $\beta'\leq\alpha'$ and $\alpha^{(2)}+\beta^{(2)}\leq{\alpha'+\beta'},$
where $\alpha', \beta'$ are defined by \eqref{A7}.
\end{lemma}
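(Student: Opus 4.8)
The plan is to dispatch Part (i) and the first inequality of Part (ii) by elementary algebra after clearing denominators, and then to reduce the second inequality of Part (ii) --- the only real difficulty --- to a single polynomial inequality that I can settle by factoring and grouping. Throughout I write $\alpha'=N_1/D_1$ and $\beta'=N_2/D_2$ with $N_1=3\alpha+6\beta+4\alpha\beta$, $D_1=6+3\alpha$, $N_2=3\alpha+4\alpha\beta$, $D_2=6+6\beta+3\alpha+4\alpha\beta$, noting the useful relations $N_1=N_2+6\beta$ and $D_2=D_1+2\beta(3+2\alpha)$.

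For Part (i) I first observe that on $\Delta$ both $\alpha,\beta$ are non-negative, so every numerator and denominator in \eqref{A7} is non-negative and the denominators are strictly positive (e.g. $6+3\alpha\ge 6$); this gives $\alpha'\ge 0$ and $\beta'\ge 0$ at once. For $\alpha'\le 2$ I clear $D_1$: the inequality is equivalent to $6\beta+4\alpha\beta\le 12+3\alpha$, and since $6\beta+4\alpha\beta=2\beta(3+2\alpha)\le 2(3+2\alpha)=6+4\alpha$ by $\beta\le 1$, it suffices that $6+4\alpha\le 12+3\alpha$, i.e. $\alpha\le 6$, which holds as $\alpha\le 4$. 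For $\beta'\le 1$ I note that $D_2$ exceeds $N_2$ by exactly $6+6\beta>0$, so $\beta'<1$. Together these give $F(\alpha,\beta)\in\Omega$.

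For the first inequality of Part (ii) I compute $\alpha'-\beta'=(N_1D_2-N_2D_1)/(D_1D_2)$ and, substituting $N_1=N_2+6\beta$ and $D_2=D_1+2\beta(3+2\alpha)$, expand the numerator to obtain $N_1D_2-N_2D_1=2\beta(3+2\alpha)N_2+6\beta D_1+12\beta^2(3+2\alpha)$, which is a sum of manifestly non-negative terms on $\Delta$. Hence $\beta'\le\alpha'$.

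The second inequality is the crux. The key observation is that it amounts to one further application of $F$: since $(\alpha^{(2)},\beta^{(2)})=F(\alpha',\beta')$, and since Part (i) together with $\beta'\le\alpha'$ guarantee $0\le\beta'\le\alpha'$, it suffices to prove the general claim that $F$ does not increase the coordinate sum on the region $\{0\le\beta\le\alpha\}$; that is, whenever $0\le\beta\le\alpha$ one has $\alpha'+\beta'\le\alpha+\beta$. Applying this with $(\alpha',\beta')$ in the role of $(\alpha,\beta)$ then yields the desired bound. To prove the general claim I clear denominators, reducing it to $P:=N_1D_2+N_2D_1-(\alpha+\beta)D_1D_2\le 0$; a routine expansion gives the factorization $P=\alpha Q$ with $Q=-9\alpha^2+(12\beta-18\alpha)+(6\beta^2-15\alpha\beta)+(4\alpha\beta^2-12\alpha^2\beta)$. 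I then check $Q\le 0$ group by group: $-9\alpha^2\le 0$; $12\beta-18\alpha\le -6\alpha\le 0$; $6\beta^2-15\alpha\beta=3\beta(2\beta-5\alpha)\le -9\alpha\beta\le 0$; and $4\alpha\beta^2-12\alpha^2\beta=4\alpha\beta(\beta-3\alpha)\le -8\alpha^2\beta\le 0$, where each estimate uses only $\beta\le\alpha$ and $\alpha,\beta\ge 0$. Hence $P=\alpha Q\le 0$. The main obstacle is precisely this step: the coordinate sum is not decreased everywhere on $\Omega$ (the inequality fails at points near $(\alpha,\beta)=(0,1)$), so the restriction $\beta\le\alpha$ supplied by the first part of (ii) is essential, and the expansion of $Q$ must be organized so that each monomial group can be dominated using $\beta\le\alpha$ alone.
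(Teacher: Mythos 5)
Your proof is correct and follows essentially the same route as the paper: part (i) by elementary bounds, the inequality $\beta'\le\alpha'$ by the same expansion of $N_1D_2-N_2D_1$ into manifestly non-negative terms, and the second inequality of (ii) by showing that $F$ does not increase the coordinate sum at any point with $0\le\beta\le\alpha$ and then applying this at $(\alpha',\beta')$, exactly as the paper does. Your explicit factorization $P=\alpha Q$ with the group-by-group estimate is in fact a cleaner rendering of the paper's displayed bound (whose stated numerator appears to omit the overall factor $\alpha'$), so nothing further is needed.
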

\begin{proof}
For the proof of the first part it suffices to observe that
\begin{equation*}
0\leq\alpha'=\frac{6\beta+3\alpha+4\alpha\beta}{6+3\alpha}
=2-\frac{6(1-\beta)+4\alpha(1-\beta)+(6-\alpha)}{6+3\alpha}\leq2
\end{equation*}
and that
\begin{equation*}
0\leq\beta'=\frac{3\alpha+4\alpha\beta}{6+6\beta+3\alpha+4\alpha\beta}
=1-\frac{6+6\beta}{6+6\beta+3\alpha+4\alpha\beta}\leq1,
\end{equation*}
while the first claim of the second part follows by observing that
\begin{equation*}
\begin{aligned}
\alpha'-\beta'&=\frac{6\beta+3\alpha+4\alpha\beta}{6+3\alpha}-\frac{3\alpha+4\alpha\beta}{6+6\beta+3\alpha+4\alpha\beta}\\
&=\frac{4(9\beta+9\beta^{2}+9\alpha\beta+12\alpha\beta^{2}+	3\alpha^{2}\beta+4\alpha^{2}\beta^{2})}{3(2+\alpha)(6+6\beta+
	3\alpha+4\alpha\beta)}\geq0.
\end{aligned}
\end{equation*}
The last claim follows from the relations
\begin{equation*}
\begin{aligned}
\alpha'+\beta'-(\alpha^{(2)}+\beta^{(2)})&=\alpha'+\beta'-\Bigl(\frac{6\beta'+3\alpha'+4\alpha'\beta'}{6+3\alpha'}+\frac{3\alpha'+	 4\alpha'\beta'}{6+6\beta'+3\alpha'+4\alpha'\beta'}\Bigl)\\
&\geq\frac{12(\alpha'-\beta')+6(\alpha'^{2}-\beta'^{2})+4\alpha'\beta'
	(\alpha'-\beta')
	%+15\alpha'\beta'+6\alpha'+3\alpha'^{2}+8\alpha'^{2}	\beta'
}{3(2+\alpha')(6+6\beta'+3\alpha'+4\alpha'\beta')}
\end{aligned}
\end{equation*}
and the fact that $\alpha'\geq\beta'$.
\end{proof}

\begin{cor}\label{cor:crucial}
For any initial point $(\alpha,\beta)\in\Delta$, it holds that
\begin{equation}\label{ineq.be.alph.pow}
0\leq\beta^{(m)}\leq\alpha^{(m)}, \quad m=1, 2, \ldots
\end{equation}
and that
\begin{equation}
\alpha^{(m+1)}+\beta^{(m+1)}\leq{\alpha^{(m)}+\beta^{(m)}},  \quad m=1, 2, \ldots
\end{equation}
In particular, the sequence $\{\alpha^{(m)}+\beta^{(m)}\}_{m\geq1}$ is convergent.
% as a consequence of the Monotone Convergence Theorem.
\end{cor}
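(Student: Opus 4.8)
The plan is to obtain both displayed inequalities as immediate consequences of Lemma~\ref{AA}, applied not once but at every stage of the orbit, and then to close with the monotone convergence theorem. The linchpin is the observation that Lemma~\ref{AA}(i) makes $\Delta$ (indeed $\Omega$) forward-invariant under $F$: starting from $(\alpha^{(0)},\beta^{(0)})=(\alpha,\beta)\in\Delta$, a trivial induction shows $(\alpha^{(m)},\beta^{(m)})=F^{m}(\alpha,\beta)\in\Omega\subset\Delta$ for every $m\geq1$. This invariance is precisely what licenses re-applying the lemma with a new ``initial point'' at each step, rather than using it only at $m=0$.

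For the first inequality I would fix $m\geq1$ and take $(\alpha^{(m-1)},\beta^{(m-1)})$ as the initial point in Lemma~\ref{AA}; it lies in $\Delta$ by the invariance just noted. The first assertion of part {\it (ii)} then reads $\beta^{(m)}\leq\alpha^{(m)}$, since $(\alpha^{(m)},\beta^{(m)})$ plays the role of $(\alpha',\beta')$ for this shifted orbit, while the nonnegativity $\beta^{(m)}\geq0$ is supplied by part {\it (i)}. As $m\geq1$ was arbitrary, this yields $0\leq\beta^{(m)}\leq\alpha^{(m)}$ for all $m\geq1$.

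For the second inequality I would again fix $m\geq1$ and apply Lemma~\ref{AA} with initial point $(\alpha^{(m-1)},\beta^{(m-1)})\in\Delta$. The second assertion of part {\it (ii)}, namely $\alpha^{(2)}+\beta^{(2)}\leq\alpha'+\beta'$, translates into exactly $\alpha^{(m+1)}+\beta^{(m+1)}\leq\alpha^{(m)}+\beta^{(m)}$, because for the shifted orbit the first iterate is $(\alpha^{(m)},\beta^{(m)})$ and the second iterate is $(\alpha^{(m+1)},\beta^{(m+1)})$. Hence $\{\alpha^{(m)}+\beta^{(m)}\}_{m\geq1}$ is nonincreasing.

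Finally, the sequence $\{\alpha^{(m)}+\beta^{(m)}\}_{m\geq1}$ is bounded below by $0$ (each term is a sum of nonnegative quantities by part {\it (i)}) and nonincreasing, so it converges by the monotone convergence theorem. I do not expect any genuine obstacle here: the entire analytic content is already packaged in Lemma~\ref{AA}, and the single point demanding care is the forward-invariance of $\Delta$ under $F$, which alone permits the lemma to be propagated along the whole trajectory instead of being invoked a single time.
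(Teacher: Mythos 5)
Your proposal is correct and is exactly the argument the paper intends: the corollary is stated without proof precisely because it follows by iterating Lemma~\ref{AA}, using part (i) for forward-invariance of $\Delta$ and part (ii) at each step of the orbit, then concluding convergence from monotonicity and the lower bound $0$. The one point you rightly flag as needing care---that invariance is what justifies re-applying the lemma along the trajectory---is the only real content, and you handle it correctly.
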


\begin{lemma}\label{BB}
For any initial point $(\alpha,\beta)\in\Delta$
the following hold
\begin{equation}\label{A10}
\lim\limits_{m\rightarrow \infty}\alpha^{(m)}=\lim\limits_
{m\rightarrow \infty}\beta^{(m)}=0.
\end{equation}
\end{lemma}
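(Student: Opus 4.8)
The plan is to exploit the two conclusions of Corollary~\ref{cor:crucial} together with the quantitative lower bound already implicit in the proof of Lemma~\ref{AA}(ii). First I would set $S^{(m)}:=\alpha^{(m)}+\beta^{(m)}$ and observe that, by Corollary~\ref{cor:crucial}, the sequence $\{S^{(m)}\}_{m\geq1}$ is nonincreasing and bounded below by $0$, hence convergent to some limit $L\geq0$; in particular $S^{(m)}-S^{(m+1)}\to0$. The whole lemma then reduces to proving $L=0$, since $0\leq\beta^{(m)}\leq\alpha^{(m)}$ confines both sequences to the interval $[0,S^{(m)}]$.

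Second, I would read off the estimate from the last display in the proof of Lemma~\ref{AA}(ii), applied along the trajectory with $(\alpha,\beta)=(\alpha^{(m-1)},\beta^{(m-1)})$ so that $(\alpha',\beta')=(\alpha^{(m)},\beta^{(m)})$, namely, for $m\geq1$,
\[
S^{(m)}-S^{(m+1)}\geq\frac{12(\alpha^{(m)}-\beta^{(m)})+6\bigl((\alpha^{(m)})^{2}-(\beta^{(m)})^{2}\bigr)+4\alpha^{(m)}\beta^{(m)}(\alpha^{(m)}-\beta^{(m)})}{3(2+\alpha^{(m)})\bigl(6+6\beta^{(m)}+3\alpha^{(m)}+4\alpha^{(m)}\beta^{(m)}\bigr)}.
\]
Since Lemma~\ref{AA}(i) guarantees $(\alpha^{(m)},\beta^{(m)})\in\Omega=[0,2]\times[0,1]$ for all $m\geq1$, the denominator is at most $3\cdot4\cdot26=312$, while each summand of the numerator is nonnegative (as $\alpha^{(m)}\geq\beta^{(m)}\geq0$), so the numerator is at least $12(\alpha^{(m)}-\beta^{(m)})$. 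Hence $S^{(m)}-S^{(m+1)}\geq\tfrac{1}{26}(\alpha^{(m)}-\beta^{(m)})\geq0$, and letting $m\to\infty$ forces $\alpha^{(m)}-\beta^{(m)}\to0$.

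Combining $\alpha^{(m)}+\beta^{(m)}\to L$ with $\alpha^{(m)}-\beta^{(m)}\to0$ shows that $\alpha^{(m)}$ and $\beta^{(m)}$ both converge to the common value $a:=L/2\geq0$. To finish, I would pass to the limit in the first equation of \eqref{alph.beta.rec}: because $6+3\alpha^{(m)}>0$ throughout, continuity yields $a=\dfrac{9a+4a^{2}}{6+3a}$, i.e.\ $a(a+3)=0$, whence $a=0$ as $a\geq0$. Therefore $L=0$ and $\lim_{m}\alpha^{(m)}=\lim_{m}\beta^{(m)}=0$.

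The one delicate point is the second step: the essential observation is that convergence of the monotone sum $S^{(m)}$ makes its increments vanish, and that the bracketed expression from Lemma~\ref{AA}(ii) controls precisely the gap $\alpha^{(m)}-\beta^{(m)}$ from below once the denominator is bounded uniformly using $\Omega$. This collapses the asymptotics onto the diagonal $\alpha=\beta$, after which the remaining fixed-point equation is one-dimensional and immediate. I expect no genuine obstacle beyond keeping the index bookkeeping straight and confirming the uniform denominator bound.
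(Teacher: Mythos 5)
Your strategy is genuinely different from the paper's (which extracts subsequential limits via Bolzano--Weierstrass and solves the limiting equation \eqref{A8}), but it has a real gap: the estimate you ``read off'' from the last display in the proof of Lemma~\ref{AA}(ii) is false, because that display in the paper is itself erroneous. Computing $\alpha'+\beta'-(\alpha^{(2)}+\beta^{(2)})$ exactly over the common denominator $3(2+\alpha')(6+6\beta'+3\alpha'+4\alpha'\beta')$ gives the numerator
\[
\alpha'\Bigl(12(\alpha'-\beta')+6(\alpha'^{2}-\beta'^{2})+4\alpha'\beta'(\alpha'-\beta')+6\alpha'+3\alpha'^{2}+15\alpha'\beta'+8\alpha'^{2}\beta'\Bigr),
\]
i.e.\ the bracket displayed in the paper is missing an overall factor $\alpha'$ together with extra terms; this corrected expression is exactly the factorization the paper itself uses to solve \eqref{A8}, which is how one can tell the typo sits in the display, not in the factorization. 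Without the factor $\alpha'$ the quoted inequality fails: at $(\alpha',\beta')=(0.1,0)=F(0,0.1)$ the left side is $0.1-2\cdot\frac{0.3}{6.3}\approx 0.0048$, while the quoted lower bound is $\frac{1.26}{39.69}\approx 0.0317$. Consequently your derived bound $S^{(m)}-S^{(m+1)}\geq\frac{1}{26}\bigl(\alpha^{(m)}-\beta^{(m)}\bigr)$ is also false along genuine trajectories: for the initial point $(\alpha^{(0)},\beta^{(0)})=(0.04,0.02)\in\Delta$ one gets $(\alpha^{(1)},\beta^{(1)})\approx(0.0397,0.0197)$, and $S^{(1)}-S^{(2)}\approx 5.2\cdot10^{-4}$ is smaller than $\frac{\alpha^{(1)}-\beta^{(1)}}{26}\approx 7.7\cdot10^{-4}$. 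The failure is structural: the fixed point $(0,0)$ is non-hyperbolic with eigenvalue $1$ in the direction $(2,1)$, and near that direction $F(2t,t)=\bigl(2t-\tfrac{2}{3}t^{2}+O(t^{3}),\,t-\tfrac{2}{3}t^{2}+O(t^{3})\bigr)$, so the decrement of the sum is $O(t^{2})$ while $\alpha-\beta=t$; no bound of the form $S^{(m)}-S^{(m+1)}\geq c\,(\alpha^{(m)}-\beta^{(m)})$ can hold in this regime, and your conclusion $\alpha^{(m)}-\beta^{(m)}\to0$ therefore does not follow by this route.

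The good news is that your plan survives once the identity is corrected, and it then becomes simpler than both your original version and the paper's proof. For $m\geq1$ you have $\alpha^{(m)}\geq\beta^{(m)}\geq0$ (Corollary~\ref{cor:crucial}) and $(\alpha^{(m)},\beta^{(m)})\in\Omega$ (Lemma~\ref{AA}(i)), so every term of the correct numerator is nonnegative and it contains the term $\alpha^{(m)}\cdot6\alpha^{(m)}$; your uniform denominator bound $312$ then yields
\[
S^{(m)}-S^{(m+1)}\;\geq\;\frac{6\bigl(\alpha^{(m)}\bigr)^{2}}{312}\;=\;\frac{\bigl(\alpha^{(m)}\bigr)^{2}}{52},\qquad m\geq1.
\]
Since $\{S^{(m)}\}_{m\geq1}$ is monotone and convergent, the left side tends to $0$, hence $\alpha^{(m)}\to0$, and $0\leq\beta^{(m)}\leq\alpha^{(m)}$ finishes the proof; the diagonal-collapse step and the one-dimensional fixed-point equation become unnecessary. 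For comparison, the paper instead takes convergent subsequences $\alpha^{(m_k)}\to a$, $\beta^{(m_k)}\to b$, passes to the limit in \eqref{alph.beta.rec} using convergence of the sum to obtain \eqref{A8}, and then uses the factorization above with $0\leq b\leq a$ to force $(a,b)=(0,0)$; your repaired argument trades that subsequence extraction for a quantitative decrement bound.
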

\begin{proof}
We recall the boundedness of these sequences from \eqref{al.be.bdd.seq}. Hence Bolzano--Weierstrass theorem ensures the
existence of real numbers $a\in[0,4]$, $b\in[0,1]$ and subsequences
$\{\alpha^{(m_k)}\}_{k\geq1}$ and $\{\beta^{(m_k)}\}_{k\geq1}$ such that
\begin{equation}\label{lims.subseq}
\lim_{k\to\infty}\alpha^{(m_k)}=a, \quad
\lim_{k\to\infty}\beta^{(m_k)}=b.
\end{equation}
Since the sequence $\{\alpha^{(m)}+\beta^{(m)}\}_{m\geq1}$ is convergent
(see Corollary~\ref{cor:crucial}), we deduce from \eqref{lims.subseq} that

\begin{equation}\label{lim.of.sum}
\begin{aligned}
\lim\limits_{m\rightarrow \infty}\Bigl
(\alpha^{(m)}+\beta^{(m)}\Bigl)&=\lim\limits_{k\rightarrow
\infty}\Bigl(\alpha^{(m_{k}+1)}+\beta^{(m_{k}+1)}\Bigl)=
\lim\limits_{k\rightarrow \infty}\Bigl(\alpha^{(m_k)}+
\beta^{(m_k)}\Bigl)=\\&=\lim\limits_{k\rightarrow \infty}
\alpha^{(m_k)}+\lim\limits_{k\rightarrow \infty}\beta^{(m_k)}=a+b.
\end{aligned}
\end{equation}
Furthermore, \eqref{ineq.be.alph.pow} and \eqref{lims.subseq} imply that
\begin{equation}\label{b.leq.a}
0\leq{b}\leq{a}.
\end{equation}	
Next, in view of \eqref{alph.beta.rec}, we can write
\begin{align*}
\alpha^{(m_{k}+1)}+\beta^{(m_{k}+1)}&=\frac{6\beta^{(m_k)}+3\alpha^{(m_k)}
+4\alpha^{(m_k)}\beta^{(m_k)}}{6+3\alpha^{(m_k)}}+ \\
&+\frac{3\alpha^{(m_k)}+4\alpha^{(m_k)}\beta^{(m_k)}}{6+6\beta^{(m_k)}+
3\alpha^{(m_k)}+4\alpha^{(m_k)}\beta^{(m_k)}}.
\end{align*}
Letting $k\to\infty$ and using \eqref{lims.subseq}, \eqref{lim.of.sum}, we
get the equation
\begin{equation}\label{A8}
a+b=\frac{6b+3a+4ab}{6+3a}+\frac{3a+4ab}{6+6b+3a+4ab}
\end{equation}
which can be written, equivalently, as
\begin{equation*}
a\bigl(12(a-b)+6(a^{2}-b^{2})+4ab(a-b)+6a+3a^{2}+15ab+8a^{2}b\bigr)=0.
\end{equation*}	
On the account of the constraint \eqref{b.leq.a}, it follows that the latter
equation has a unique solution $$(a,b)=(0,0).$$ In particular, we have
\begin{equation}
\lim_{m\to\infty}\bigl(\alpha^{(m)}+\beta^{(m)}\bigl)=0.
\end{equation}
However, this observation together with the inequality
\begin{equation*}
0\leq\beta^{(m)}\leq\alpha^{(m)}\leq \alpha^{(m)}+\beta^{(m)},
\quad m=1, 2, \ldots
\end{equation*}
imply that both of the sequence $\{\alpha^{(m)}\}_{m\geq1}$ and $\{\beta^{(m)}\}_{m\geq1}$ converge to zero as $m\to\infty$.\\
This completes the proof.
\end{proof}
\begin{cor} The result \eqref{A10} gives
\begin{equation}\label{conc.1}
\lim\limits_
{m\rightarrow \infty}y^{(m)}=\lim\limits_{m\rightarrow \infty}v^{(m)}=0.
\end{equation}
\end{cor}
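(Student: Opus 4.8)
The plan is to exploit the definitions \eqref{not.al.be}, which upon rearrangement give the identities $y^{(m+2)}=\alpha^{(m)}x^{(m+2)}$ and $v^{(m+2)}=\beta^{(m)}u^{(m+2)}$ for every nonnegative integer $m$. Every coordinate $y^{(m)}$ and $v^{(m)}$ with $m\geq2$ is captured in this form, so it suffices to show that these products tend to zero as $m\to\infty$.

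First I would record that the factors $x^{(m+2)}$ and $u^{(m+2)}$ are uniformly bounded above by $\tfrac{1}{2}$; this is precisely the content of the first and third upper bounds in part (ii) of Lemma~\ref{lem:maj}. Combined with the nonnegativity of all coordinates (each $s^{(m)}$ lies in $S^{2,2}\subset S^{3}$), this sandwiches the quantities of interest between zero and a multiple of the sequences already controlled:
\begin{equation*}
0\leq y^{(m+2)}=\alpha^{(m)}x^{(m+2)}\leq\tfrac{1}{2}\myp\alpha^{(m)}, \qquad 0\leq v^{(m+2)}=\beta^{(m)}u^{(m+2)}\leq\tfrac{1}{2}\myp\beta^{(m)}.
\end{equation*}

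Finally I would invoke Lemma~\ref{BB}, which asserts $\alpha^{(m)}\to0$ and $\beta^{(m)}\to0$, and apply the squeeze theorem to obtain $y^{(m+2)}\to0$ and $v^{(m+2)}\to0$ as $m\to\infty$; since a shift of the index does not affect a limit, this is exactly \eqref{conc.1}. I do not expect any genuine obstacle at this stage: the entire analytic difficulty has already been absorbed into Lemma~\ref{BB}, and the only point requiring a moment of care is to quote the \emph{upper} bounds on $x^{(m+2)}$ and $u^{(m+2)}$ from Lemma~\ref{lem:maj}(ii)—together with nonnegativity from membership in the simplex—rather than the lower bounds, so that the squeeze is set up correctly.
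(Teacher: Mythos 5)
Your proposal is correct and coincides with the paper's own argument: the paper likewise writes $0\leq y^{(m+2)}=\alpha^{(m)}x^{(m+2)}\leq\frac{1}{2}\alpha^{(m)}$ and $0\leq v^{(m+2)}=\beta^{(m)}u^{(m+2)}\leq\frac{1}{2}\beta^{(m)}$ using Lemma~\ref{lem:maj}(ii) and \eqref{not.al.be}, then concludes via \eqref{A10}. No differences worth noting.
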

Indeed, from Lemma~\ref{lem:maj}(ii) and \eqref{not.al.be} we write
$$0\leq y^{(m+2)}=\alpha^{(m)}x^{(m+2)}\leq\frac{1}{2}\alpha^{(m)}, \quad m=0,1, 2, \ldots$$
$$0\leq v^{(m+2)}=\beta^{(m)}u^{(m+2)}\leq\frac{1}{2}\beta^{(m)}, \quad m=0,1, 2, \ldots$$
Hence, \eqref{conc.1} holds.\\
On the other hand, in view of \eqref{A40}, we have
\begin{equation*}
x^{(m+3)}=\frac{2+\alpha^{(m)}}{4(1+\alpha^{(m)})(1+\beta^{(m)})},
\quad
u^{(m+3)}= \frac{6+6\beta^{(m)}+3\alpha^{(m)}+4\alpha^{(m)}\beta^{(m)}}{12(1+\alpha^{(m)})(1+\beta^{(m)})},
\end{equation*}
implying the convergence of the sequences $\{x^{(m)}\}_{m\geq1}$ and
$\{u^{(m)}\}_{m\geq1}$ with
\begin{equation}\label{conc.2}
\lim_{m\to\infty}x^{(m)}=\lim_{m\to\infty}u^{(m)}=\frac{1}{2}.
\end{equation}
Now \eqref{A6} follows from \eqref{conc.1} and \eqref{conc.2}.

%\section{Conclusion}
%We have considered the dynamical systems of a hemophilia
%generated by normalized gonosomal evolution operator of sex linked
%inheritance in $S^{2,2}$ which is a subset of $S^{3}$
%-$3$ dimensional simplex and studied their trajectory behavior. Ideas on
%proving Lemma~\ref{AA} and Lemma~\ref{BB} give a chance to study on
%dynamical system \eqref{A1} with arbitrary coefficients \eqref{A5}.
%Now we can see that the result \eqref{A6} has biological interpretations
%which made by U.A. Rozikov and R. Varro in \cite{Rozikov.book.2016}.
%Actually, the dynamical systems considered in this paper are interesting
%as they are examples for nonlinear higher dimensional discrete-time dynamical
%systems that have not been fully understood yet \cite{Ganikhodzhaev.book.2011}.

\end{document}